\newtheorem{thm}{Theorem}[section]
\newtheorem{cor}[thm]{Corollary}
\newtheorem{prop}[thm]{Proposition}
\theoremstyle{definition}
\newtheorem{rem}[thm]{Remark}
\numberwithin{equation}{section}
\newcommand{\lin}{\operatorname{span}}
\newcommand{\supp}{\operatorname{supp}}
\newcommand{\dif}{\,\mathrm{d}}
\newcommand{\charfun}{\ensuremath{\mathbbm 1}}
\begin{document}
\title{Orthoprojectors on perturbations of splines spaces}
\author[K. Keryan]{Karen Keryan}
\address{Yerevan State University, Alex Manoogian 1, 0025 Yerevan, Armenia,
	American University of Armenia, Marshal Baghramyan 40, 0019 Yerevan, Armenia}
\email{karenkeryan@ysu.am, kkeryan@aua.am}

\author[M. Passenbrunner]{Markus Passenbrunner}
\address{Institute of Analysis, Johannes Kepler University Linz, Austria, 4040 Linz, Altenberger Strasse 69}
\email{markus.passenbrunner@jku.at}

\keywords{Orthogonal projectors, Spline spaces, Chebyshevian splines.}
\subjclass[2010]{41A15, 42C10}

\begin{abstract}
	We show that $L^\infty$-norms of orthoprojectors on  certain
types of perturbations of spline spaces are bounded independently of the knot
sequence. Explicit
applications of this result are given, one of them being orthoprojectors onto Chebyshevian
spline spaces.
\end{abstract}
\maketitle
\section{Introduction}

In this paper, we extend Shadrin's theorem \cite{Shadrin2001} on the  
boundedness of the polynomial spline orthoprojector on $L^\infty$  by a constant 
that does not depend on the underlying  univariate grid 
 to  certain perturbations of spline spaces.

One of the main reasons to consider this extension of spline
orthoprojectors is that in recent years, it turned out that in many cases
 (see e.g. \cite{Shadrin2001, PassenbrunnerShadrin2014,
Passenbrunner2014, MuellerPassenbrunner2017,GevorkyanKeryanPoghosyan2020,Passenbrunner2020b,Passenbrunner2020a}),
sequences of   orthogonal projections onto classical spline spaces  corresponding to arbitrary 
grid sequences  
behave like sequences of conditional
expectations (or, more generally, like  martingales) and we
want to extend martingale type
results to an even larger class of orthogonal projections.

In order to explain those martingale type results, we have to introduce a little bit of
terminology: Let $k$ be a positive integer, $(\mathcal F_n)$ an increasing
sequence of interval $\sigma$-algebras of sets in $[0,1]$, where we say that a
$\sigma$-algebra is an \emph{interval
$\sigma$-algebra} if it is generated 
by a finite partition of $[0,1]$ into intervals of positive length. Moreover,
let
\[
	S_n^{(k)} = \{f\in C^{k-2}[0,1] : f\text{ is a polynomial of order $k$ on
	each atom of $\mathcal F_n$} \}
\]
be the spline space of order $k$ corresponding to $\mathcal F$
and  define $P_n^{(k)}$ as the orthogonal projection operator onto
$S_n^{(k)}$ with respect to the
$L^2$  inner product on $[0,1]$  with Lebesgue measure  $|\cdot|$.  The space $S_n^{(1)}$ 
(interpreting $C^{-1}[0,1]$ as the space of all real-valued functions on
$[0,1]$)
consists of
piecewise constant functions and $P_n^{(1)}$ is the conditional expectation
operator with respect to the $\sigma$-algebra $\mathcal F_n$.
 Similarly to the definition of martingales, we introduce the following
notion pertaining to spline spaces:
let $(f_n)_{n\geq 0}$ be a sequence of integrable functions, we call this
sequence a \emph{$k$-martingale spline sequence (adapted to $(\mathcal F_n)$)}, if
\[
	P_n^{(k)} f_{n+1} = f_n,\qquad n\geq 0.
\]

 Classical martingale theorems such as Doob's
	inequality, the martingale convergence theorem or Burkholder's
	inequality in fact carry over to
	$k$-martingale spline sequences corresponding to  \emph{arbitrary}
	filtrations ($\mathcal F_n$) of the above type. Indeed, we have
	for any positive integer $k$
		\begin{enumerate}[(i)]
			\item\label{it:splines1} (Shadrin's theorem) 
				there exists
				a constant $c_k$ depending only on $k$ such that 
				\[
					\sup_n\| P_n^{(k)} : L^1 \to L^1 \| \leq
					c_k,
				\]
			\item \label{it:splines2}
				there exists a constant $c_k$  depending only on
				$k$ such that for any $k$-martingale
				spline sequence
				$(f_n)$ and any
				$\lambda>0$, 
				\begin{equation*}
					|\{ \sup_n |f_n| > \lambda \}| \leq c_k
				\frac{\sup_n\|f_n\|_{L^1}}{
				\lambda},
				\end{equation*}

			\item\label{it:splines3} for all $p\in (1,\infty]$ there exists a constant
				$c_{p,k}$  depending only on $p$ and
				$k$ such that for all $k$-martingale
				spline sequences
				$(f_n)$,
				\begin{equation*}
					\big\| \sup_n |f_n| \big\|_{L^p}
					\leq c_{p,k}
					\sup_n\|f_n\|_{L^p},\ 
				\end{equation*}
			\item\label{it:splines4}
				if $(f_n)$ is an
				$L^1$-bounded $k$-martingale spline sequence, then
				$(f_n)$ converges
				 almost surely to some $L^1$-function.

			\item	for all $p\in(1,\infty)$,
				scalar-valued $k$-spline-differences
				converge unconditionally in $L^p$, i.e. for all
				$f\in L^p$,
				\begin{equation*}
					\big\|\sum_n \pm (P_n^{(k)} -
					P_{n-1}^{(k)})f \big\|_{L^p}
					\leq c_{p,k} \|f\|_{L^p},
				\end{equation*}
				for some constant $c_{p,k}$ depending only on
				$p$ and $k$.
		\end{enumerate}
	Item \eqref{it:splines1} is proved in \cite{Shadrin2001}, for a considerably
	shorter proof we refer to \cite{Golitschek2014}.
	Banach space valued versions of \eqref{it:splines2}--\eqref{it:splines4}
	are proved  in \cite{PassenbrunnerShadrin2014,
				MuellerPassenbrunner2017} and (v) is proved in
				\cite{Passenbrunner2014}.
				For periodic spline spaces some of those properties are proved in
				\cite{Passenbrunner2017,KeryanPassenbrunner2017}.
 The basic starting point in proving the results (ii)--(v) independently
of the filtration $(\mathcal F_n)$ is Shadrin's theorem (i) and in this paper we
prove its analogue for certain perturbations of spline spaces.

	An important tool in the analysis of the operators $P_n^{(k)}$ as well
	as in the formulation of our perturbation result in Section
	\ref{sec:perturb} are special localized bases of the spaces
	$S_n^{(k)}$ and perturbations thereof. In the case of the space
	$S_n^{(k)}$ this is the so called B-spline basis $(M_i)$, normalized in
	$L^1$.
	By definition, the spaces $(S_n^{(k)})_n$ are nested, i.e.,
	$S_n^{(k)} \subset S_{n+1}^{(k)}$ for all $n$. Moreover, the 
	B-splines $(M_i)$ have the following properties:
	\begin{enumerate}[(a)]
		\item $\supp M_i \cap \supp M_j =\emptyset$ for $|i-j|\geq k$,
		\item \label{it:local} each function $M_j$ only depends on the  
			local form of $\mathcal F_n$, i.e., on $\mathcal F_n\cap \supp
			M_j$,
		\item \label{it:Misupp}$\supp M_{i}$ is a union of atoms of $\mathcal F_n$.
	\end{enumerate}

	Note that	the uniform (in $n$) $L^1$-boundedness of $P_n^{(k)}$ stated in (i) 
	and their uniform $L^\infty$-boundedness are equivalent, as $P_n^{(k)}$ 
		is   
		self-adjoint with respect to the inner product  $\langle f,g\rangle =
	\int_0^1 f(x)g(x)\dif x$ since it is an orthogonal projection.
	Defining the renormalized B-spline function $N_i = (|\supp M_i|/k) M_i$,
	the uniform boundedness of
	$\|P_n^{(k)}\|_{L^\infty}=\|P_n^{(k)}:L^\infty\to L^\infty\|$ can be rephrased
	in terms of the Gram matrix 
	$G = (\langle M_i, N_j\rangle)$. In fact, the uniform boundedness
	 of
	$\|P_n^{(k)}\|_{L^\infty}$
	is equivalent (see \cite{Ciesielski2000} or
	\cite{PassenbrunnerShadrin2014}) to the uniform estimate 
	\begin{equation}\label{eq:graminverse}
		\|G^{-1}\|_\infty\leq C_k,
	\end{equation}
	where $C_k$ is some constant depending only on the spline order $k$.

\section{Projectors onto perturbed spline spaces}\label{sec:perturb}
In this section we define what we mean by perturbations of spline spaces
	and prove the corresponding theorem about the uniform boundedness on
	$L^\infty$ of the
	associated orthoprojectors.

	Let $k$ be an arbitrary positive integer, $\mu$ be a non-atomic probability measure on $[0,1]$ and $\theta :
	[0,\infty)\to [0,\infty)$ be an increasing function with $\lim_{t\to 0} \theta(t)=0$.
For any interval $\sigma$-algebra $\mathcal F$ on $[0,1]$,  let $S_{\mathcal F}^{(k)}$ be the spline
space of order $k$ corresponding to $\mathcal F$ and let $S_{\mathcal F,p}^{(k)}\subset L^2(\mu)$ be a finite
dimensional linear space.
As above, we denote by $(M_i)$ the B-spline basis of $S_{\mathcal F}^{(k)}$ and
additionally, we use the notation $\langle f,g\rangle_\mu = \int_0^1 f(x) g(x)
\dif \mu(x)$ and $|\mathcal F|_\mu = \max_A \mu(A)$, where $\max$ is taken over all
atoms $A$ of $\mathcal F$. We also define the \emph{$\mathcal F$-support}
$\supp_{\mathcal F} f$ of a function $f:[0,1]\to\mathbb R$ to be the smallest
subset of $[0,1]$ that is a union of atoms of $\mathcal F$ and contains the
support $\supp f$ of $f$.

We say that the collection $(S_{\mathcal F,p}^{(k)})_{\mathcal F}$ is a
\emph{($\mu$,$\theta)$-perturbation of the spline spaces $(S_{\mathcal F}^{(k)})_{\mathcal F}$ with constant
$C$} if, for any $\mathcal F$, 
$S_{\mathcal F,p}^{(k)}$ admits a basis  $(M_i^p)$ so that 
for any indices $i,j$, we have 
\begin{enumerate}
	\item \label{it:perturb}
		$\big| {\mu(\supp_{\mathcal F} M_{j}^p)} { \langle M_i^p, M_j^p\rangle_\mu}
		- {|\supp_{\mathcal F} M_j|} {\langle
				M_i,M_j\rangle} \big| \leq \theta(|\mathcal F|_\mu)$,
			\item \label{it:supp}$\supp_{\mathcal F} M_i^p \cap
				\supp_{\mathcal F} M_j^p =\emptyset$ for $|i-j|\geq C$,
	\item \label{it:norm}$  \|M_i^p\|_{L^\infty(\mu)}  \cdot
		\mu(\supp_{\mathcal F} M_{i}^p)
		\leq C$.
	\end{enumerate}

	If $(S_{\mathcal F,p}^{(k)})_{\mathcal F}$ is a
($\mu$,$\theta)$-perturbation of $(S_{\mathcal F}^{(k)})_{\mathcal F}$, we say that the spaces $(S_{\mathcal F,p}^{(k)})_{\mathcal F}$ are
\emph{compatible} if for each $\mathcal G\subset \mathcal F$, the following
conditions are satisfied: 
\begin{enumerate}\setcounter{enumi}{3}
	\item Nestedness: $S_{\mathcal G,p}^{(k)} \subset S_{\mathcal
		F,p}^{(k)}$,
	\item  \label{it:localstructure} Local structure of the basis:
		for each set $I\subset [0,1]$ so that the trace
			$\sigma$-algebras $\mathcal F\cap I$ and  $\mathcal
			G\cap I$ coincide and
			each basis function $M_{i}^p$ of $S_{\mathcal
			F,p}^{(k)}$ with 
			$\supp_{\mathcal F} M_i^p\subset I$, we also have $M_i^p\in
			S_{\mathcal G,p}^{(k)}.$
\end{enumerate}

If $(S_{\mathcal F,p}^{(k)})_{\mathcal F}$ is a
($\mu$,$\theta)$-perturbation of $(S_{\mathcal F}^{(k)})_{\mathcal F}$, 
denote by $P_{\mathcal F,\mu}:L^2(\mu)\to L^2(\mu)$ the orthogonal projection
operator onto the space $S_{\mathcal F,p}^{(k)}$ with respect to the inner
product $\langle\cdot,\cdot\rangle_\mu$.

\begin{rem}
	For B-spline functions $(M_i)$ that form a basis of some spline space $S_{\mathcal
	F}^{(k)}$, the notions of support and $\mathcal
	F$-support coincide by property \eqref{it:Misupp} on page
	\pageref{it:Misupp}, i.e. we have $\supp M_i = \supp_{\mathcal F} M_i$
	for any index $i$.

 The collection of spline spaces $(S_{\mathcal F}^{(k)})_{\mathcal F}$
is a compatible $(|\cdot|,0)$-perturbation of itself.

The last condition \eqref{it:localstructure} means that the basis
			function $M_i^p$ is, in some sense, determined only by
			the local structure of $\mathcal F$. 
			Observe that by their very definition and  property
				\eqref{it:local} on page \pageref{it:local}
			of the B-spline functions $(M_i)$, the spline spaces $(S_{\mathcal
			F}^{(k)})_{\mathcal F}$ are
			compatible.

 If $\mu$ is an arbitrary non-atomic finite measure on
$[0,1]$, we can define the probability measure $\overline\mu = \mu/m$ with
$m=\mu[0,1]$. Additionally the functions 
${\overline{M_i}^p}
= m M_i^p$ and $\overline{\theta}(t) = \theta(mt)$ satisfy conditions
\eqref{it:perturb}--\eqref{it:norm} as well as $P_{\mathcal F,\overline{\mu}} =
P_{\mathcal F,\mu}$. Therefore, there is no loss of generality in assuming  
$\mu$ to be  a probability measure.
		\end{rem}

\begin{thm}\label{thm:perturbation}
	Let $k$ be a positive integer, $\mu$ be a non-atomic probability measure on
	$[0,1]$,
	$\theta:[0,\infty)\to [0,\infty)$ be an increasing function with
		$\lim_{t\to 0} \theta(t)=0$ and $C$ be a positive
	constant. Assume that 
	$(S_{\mathcal F,p}^{(k)})_{\mathcal F}$ is a $(\mu,\theta)$-perturbation of $(S_{\mathcal
	F}^{(k)})_{\mathcal F}$ with constant $C$.

	Then, there exists a constant $K_1$ depending only on $C$ and $k$ so that
	\[
		\sup_{\mathcal F} \| P_{\mathcal F,\mu} : L^\infty(\mu) \to
		L^\infty(\mu)\| \leq K_1,
	\]
	where $\sup$ is taken over all interval $\sigma$-algebras $\mathcal F$ with
	$|\mathcal F|_\mu \leq \varepsilon$ for $\varepsilon>0$ taken so that
	$\theta(\varepsilon)\leq k/(4C_kC)$
	with the constant  $C_k$ from \eqref{eq:graminverse}.

	Additionally, if the spaces $(S_{\mathcal F,p}^{(k)})_{\mathcal F}$ are
	compatible
	we have
	\[
		\sup_{\mathcal F} \| P_{\mathcal F,\mu} : L^\infty(\mu) \to
		L^\infty(\mu)\| \leq K_2,
	\]
	where $\sup$ is taken over all interval $\sigma$-algebras $\mathcal F$
	and $K_2$ is a constant depending only on $C$, $\theta $ and $k$.
\end{thm}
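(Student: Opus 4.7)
The plan is to reduce the $L^\infty$ operator bound for $P_{\mathcal F,\mu}$ to a row-sum bound on the inverse of a suitable Gram matrix, in the spirit of the classical equivalence recalled in~\eqref{eq:graminverse}.

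For the first assertion, I would set $N_j^p := \mu(\supp_{\mathcal F} M_j^p)\,M_j^p$ and write the projection as $P_{\mathcal F,\mu}f = \sum_j \beta_j N_j^p$. The Galerkin equations $\langle P_{\mathcal F,\mu}f, M_i^p\rangle_\mu = \langle f, M_i^p\rangle_\mu$ take the form $\tilde G\beta = b$, where $b_i = \langle f, M_i^p\rangle_\mu$ and $\tilde G_{ij} = \mu(\supp_{\mathcal F} M_j^p)\,\langle M_i^p, M_j^p\rangle_\mu$ is precisely the quantity appearing in condition~\eqref{it:perturb}. Condition~\eqref{it:norm} gives $|b_i|\leq C\|f\|_{L^\infty(\mu)}$ via $\|M_i^p\|_{L^1(\mu)}\leq \mu(\supp_{\mathcal F}M_i^p)\|M_i^p\|_{L^\infty(\mu)}\leq C$, and also $\mu(\supp_{\mathcal F}M_j^p)|M_j^p(x)|\leq C\charfun_{\supp_{\mathcal F}M_j^p}(x)$; coupled with~\eqref{it:supp} the latter yields $\sum_j \mu(\supp_{\mathcal F}M_j^p)|M_j^p(x)|\leq C^2$ pointwise. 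Hence $\|P_{\mathcal F,\mu}f\|_{L^\infty(\mu)}\leq C^3\|\tilde G^{-1}\|_\infty\|f\|_{L^\infty(\mu)}$, and everything reduces to bounding $\|\tilde G^{-1}\|_\infty$.

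The heart of the argument is then the control of $\|\tilde G^{-1}\|_\infty$. Condition~\eqref{it:perturb} says that $\tilde G$ differs from the scaled classical matrix $kG^c$, where $G^c_{ij} = \langle M_i, N_j\rangle$ with $N_j = (|\supp M_j|/k)M_j$, by at most $\theta(|\mathcal F|_\mu)$ in each entry. Since the classical B-splines satisfy $\supp M_i\cap\supp M_j = \emptyset$ for $|i-j|\geq k$, while~\eqref{it:supp} gives the analogous statement with $C$ for the perturbed basis, the error matrix $\tilde G - kG^c$ is banded of width at most $2\max(k,C)$, so $\|\tilde G-kG^c\|_\infty\leq 2\max(k,C)\,\theta(|\mathcal F|_\mu)$. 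Combined with $\|(kG^c)^{-1}\|_\infty\leq C_k/k$ from~\eqref{eq:graminverse}, a Neumann-series perturbation yields $\|\tilde G^{-1}\|_\infty\leq 2C_k/k$ as soon as $\theta(|\mathcal F|_\mu)$ is below the threshold specified in the theorem; this finishes the first assertion with $K_1$ a fixed multiple of $C^3 C_k/k$.

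For the compatible case I would lift the smallness assumption on $|\mathcal F|_\mu$ by refinement: since $\mu$ is non-atomic, each $\mathcal F$ admits some $\mathcal F'\supset \mathcal F$ with $|\mathcal F'|_\mu\leq\varepsilon$, obtained by subdividing only the at most $\lceil 1/\varepsilon\rceil$ atoms of $\mathcal F$ whose $\mu$-measure exceeds $\varepsilon$. Nestedness yields $P_{\mathcal F,\mu} = P_{\mathcal F,\mu}P_{\mathcal F',\mu}$, and the first part gives $\|P_{\mathcal F',\mu}\|_{L^\infty(\mu)\to L^\infty(\mu)}\leq K_1$. The main obstacle is that naive composition is circular, so to overcome it I would invoke the local structure property~\eqref{it:localstructure}: every basis function of $S_{\mathcal F',p}^{(k)}$ whose $\mathcal F'$-support lies entirely in the unrefined region already belongs to $S_{\mathcal F,p}^{(k)}$ and is therefore fixed by $P_{\mathcal F,\mu}$. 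Expanding $P_{\mathcal F',\mu}f$ in the $\mathcal F'$-basis and splitting accordingly, only the $O(1/\varepsilon)$ basis functions clustered near the refined atoms require nontrivial projection under $P_{\mathcal F,\mu}$; controlling this finite residual by means of~\eqref{it:supp}--\eqref{it:norm} should then produce the uniform bound $K_2$ depending on $C$, $k$, and $\theta$ (through $\varepsilon$).
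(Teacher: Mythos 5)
Your proof of the first assertion is correct and follows essentially the paper's route: both arguments reduce the $L^\infty$ bound to a row-sum bound on the inverse of the perturbed Gram matrix and obtain that bound by a Neumann series around Shadrin's estimate \eqref{eq:graminverse}. Your version is in fact slightly leaner, since after bounding $\|\tilde G^{-1}\|_\infty$ you conclude directly from the finite overlap \eqref{it:supp} and the pointwise bound \eqref{it:norm}, whereas the paper additionally invokes Demko's theorem to obtain geometric decay $|a_{ij}|\leq c q^{|i-j|}$ of the entries of $G_p^{-1}$ before summing --- a step that is not needed for the first assertion (though the paper does need it later). One cosmetic caveat: the error matrix $\tilde G - kG^c$ is banded of width $\max(k,C)$ rather than $C$, so you should assume $C\geq k$ without loss of generality to match the stated threshold $\theta(\varepsilon)\leq k/(4C_kC)$.

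The genuine gap is in the second assertion. You correctly identify the circularity in $P_{\mathcal F,\mu}=P_{\mathcal F,\mu}P_{\mathcal F',\mu}$ and correctly use \eqref{it:localstructure} to see that all but $O(C/\varepsilon)$ of the fine basis functions are already fixed by the coarse projection. But after splitting $P_{\mathcal F',\mu}f=g_1+g_2$ with $g_1\in S^{(k)}_{\mathcal F,p}$, you still face the term $P_{\mathcal F,\mu}g_2$, and ``controlling this finite residual by means of \eqref{it:supp}--\eqref{it:norm}'' cannot work: those are properties of the basis functions of the coarse space, not of its dual functionals, and any $L^\infty(\mu)$ bound for $P_{\mathcal F,\mu}$ applied to \emph{anything} requires control of the inverse Gram matrix of the coarse space --- exactly what the perturbation hypothesis fails to provide when $|\mathcal F|_\mu>\varepsilon$. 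So reducing to finitely many basis functions does not by itself break the circle. The paper's resolution is to never apply the coarse projection to a function: it considers the difference $(P_{\mathcal F',\mu}-P_{\mathcal F,\mu})f$, which lies in the \emph{fine} space, is orthogonal to every fine basis function $N_i^p$ whose $\mathcal F'$-support lies in the unrefined region (both operators are orthogonal projections whose ranges contain such $N_i^p$, by compatibility), and is then expanded in the dual basis of the fine space as in \eqref{eq:differenceP}; the pointwise size of the dual functions is controlled by part one together with Demko's decay (this is where Demko is genuinely needed), and the surviving $O(C/\varepsilon)$ coefficients $d_i=\langle(P_{\mathcal F',\mu}-P_{\mathcal F,\mu})f,N_i^p\rangle_\mu$ are bounded using only $\|P\|_{L^2(\mu)\to L^2(\mu)}=1$ and Cauchy--Schwarz, at the cost of a factor $\mu(\supp_{\mathcal F'}N_i^p)^{-1/2}\leq(\varepsilon/2)^{-1/2}$, which is the source of the dependence of $K_2$ on $\theta$. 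Some such transfer of the trivial $L^2$ bound on the coarse projection to $L^\infty$ through the fine dual basis is the missing ingredient in your sketch.
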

\begin{proof}
	Let $\mathcal F$ be an arbitrary interval $\sigma$-algebra with
	$|\mathcal F|_\mu \leq \varepsilon$ and $(M_i)$, $(M_i^p)$ be the bases
	corresponding to the spaces $S_{\mathcal F}^{(k)}$ and $S_{\mathcal
	F,p}^{(k)}$, respectively, satisfying conditions
	\eqref{it:perturb}--\eqref{it:norm}.
Let $G=(\langle M_{i},N_{j}\rangle)_{ij}$ and $G_p=(\langle
M^p_{i},N^p_{j}\rangle_\mu)_{ij}$, where $N_j= (|\supp M_j|/k) M_j$
denotes the re-normalized
classical B-spline and 
\begin{equation}\label{eq:defNp}
	N^p_{j}:=(\mu(\supp_{\mathcal F} M_{j}^p)/k)\cdot M^p_{j}.
\end{equation}
  By Shadrin's theorem in the form of
\eqref{eq:graminverse}, there exists a constant $C_k$ depending only on $k$ such that
\begin{equation}\label{eq:G G^-1}
\|G^{-1}\|_\infty\leq C_k.
\end{equation} 
Taking into account that both $G$ and $G_p$ are   $C$-banded matrices by
\eqref{it:supp} we get by property \eqref{it:perturb} that
\begin{equation*}
	\|G_p-G\|_\infty \leq 2C \theta(|\mathcal F|_\mu)/k.
\end{equation*}
Denote $X= -G^{-1}(G_p-G).$ 
Since $|\mathcal F|_\mu\leq \varepsilon$,
with $\varepsilon$ so that $\theta(\varepsilon)\leq k/(4C_k C)$, we obtain
$$\|X\|_\infty\leq \|G^{-1}\|_\infty \|G_p-G\|_\infty \leq 1/2.$$ 
Thus we have $(I-X)^{-1}=\sum_{k=0}^\infty X^k$ and 
\begin{equation}\label{eq:neumann}
	\|(I-X)^{-1}\|_\infty\leq \sum_{k=0}^\infty \|X\|_\infty^k\leq 2.
\end{equation}
Notice that $(G_p)^{-1}=(G+(G_p-G))^{-1}=(I+G^{-1}(G_p-G))^{-1}G^{-1}=(I-X)^{-1}G^{-1}.$ 
Using estimate \eqref{eq:neumann} together with \eqref{eq:G G^-1} we obtain 
\begin{equation}\label{eq:G_p^-1}
\|(G_p)^{-1}\|_\infty\leq 2 \|G^{-1}\|_\infty\leq 2C_k.
\end{equation}
Additionally, we observe that we have a similar bound also for the norm
$\|G_p\|_\infty$ of the banded matrix $G_p$
by properties \eqref{it:supp} and \eqref{it:norm}: 
\begin{equation*}
	\begin{aligned}
	\|G_p\|_\infty &= \max_i \sum_j |\langle M_i^p, N_j^p\rangle_\mu|  \\
	&\leq \max_i \sum_{j : \supp M_i^p\cap \supp M_j^p\neq \emptyset}
	\|M_i^p\|_{L^1(\mu)} \|N_j^p\|_{L^\infty(\mu)} \\
	&\leq 2C\max_{i,j}  
	\|M_i^p\|_{L^\infty(\mu)}\mu(\supp M_i^p)\cdot \frac{\mu(\supp M_j^p)}{k} 
	\|M_j^p\|_{L^\infty(\mu)} \leq 2C^3/k.
\end{aligned}
\end{equation*}
Next, we apply Demko's theorem \cite{Demko1977} that states--in particular--that
the boundedness of $\|G_p\|_\infty$ and $\|(G_p)^{-1}\|_\infty$ of a banded
matrix $G_p$
is sufficient to deduce the geometric decay of the matrix
$(a_{ij})_{ij}=G_p^{-1}$, i.e. we have for any $i,j$ the  estimate $|a_{ij}|\leq c q^{|i-j|}$, 
where $c$ and $q\in(0,1)$ are two constants
depending only on $\|G_p\|_\infty$ and $\|(G_p)^{-1}\|_\infty$.  In our
case this means that $c$ and $q$ only depend on $C$ and $k$.
Using this matrix, the projection operator $P_{\mathcal F,\mu}$ onto
$S_{\mathcal F,p}$ is given by the formula
 \[
	 P_{\mathcal F,\mu}f(t)=\sum_{i,j} a_{ij}\langle f,M_j^p\rangle_\mu N^p_i(t).
 \]
 Hence, by \eqref{it:norm} and the geometric decay estimate for $a_{ij}$, we
 have for $\mu$-almost-every $t\in[0,1]$ the inequality 
\begin{align*}|Pf(t)|
	& \leq \sum_{i,j: t\in\supp N_i^p} |a_{ij}| \|f\|_{L^\infty(\mu)} \|M_j^p\|_{L^1(\mu)}
	\|N_i^p\|_{L^\infty(\mu)} \\
	&\leq \frac{c C^2}{k} \sum_{i,j:t\in\supp N_i^p} q^{|i-j|} \| f
	\|_{L^\infty(\mu)}.
 \end{align*}
 Now we use again property \eqref{it:supp} to deduce 
 \[
	 \|Pf\|_{L^\infty(\mu)} \leq K_1
	 \|f\|_{L^\infty(\mu)}
 \]
 with $K_1=4cC^3\sum_{j=0}^\infty q^{j}/k$ depending only on $C$ and $k$. This
 concludes the proof of the first assertion of the theorem.

 Next, we assume that the spaces $(S_{\mathcal F,p}^{(k)})_{\mathcal F}$ are
 compatible as well.
	 Let $\mathcal G$ be an interval $\sigma$-algebra with
 $|\mathcal G|_\mu>\varepsilon$.
 Let $U\subset [0,1]$ be the point set consisting of all atoms $A$ of $\mathcal G$
 with $\mu(A)\leq \varepsilon$.
 Next, we let $\mathcal F$ be an
 interval $\sigma$-algebra with $|\mathcal F|_\mu\leq \varepsilon$ that is a
 refinement of $\mathcal G$ and coincides with $\mathcal G$ on $U$ and has the
 property that for any atom $A\subset U^c$ of $\mathcal F$, we have $\mu(A)\geq
 \varepsilon/2$. This is possible since $\mu$ is assumed to be non-atomic.
 Let $N_i^p$ be a basis function  from $S_{\mathcal F,p}^{(k)}$, given by
	 \eqref{eq:defNp}, with $\supp_{\mathcal F} N_i^p \subset U$.
 Then, since $S_{\mathcal F,p}^{(k)}$ and $S_{\mathcal G,p}^{(k)}$ are
 compatible, the function $N_{i}^p$ is also contained in $S_{\mathcal
 G,p}^{(k)}$. Since the operators $P_{\mathcal G,\mu}$ and $P_{\mathcal F,\mu}$
 are both orthogonal projections, we get that both $P_{\mathcal G,\mu} f - f$
 and $P_{\mathcal F,\mu} f - f$ are orthogonal to
 the function $N_i^p$ in $L^2(\mu)$.  This implies
 \[
	\langle (P_{\mathcal F,\mu} - P_{\mathcal G,\mu}) f, N_i^p \rangle_\mu =
	0,\qquad \supp_{\mathcal F} N_i^p \subset U.
 \]
 Therefore, since $S_{\mathcal G,p}^{(k)}\subset S_{\mathcal F,p}^{(k)}$, we
 expand 
 \begin{equation}\label{eq:differenceP}
	 (P_{\mathcal F,\mu} - P_{\mathcal G,\mu}) f = \sum_{i :
		 \supp_{\mathcal F} N_i^p
	\not\subset U} d_i N_i^{p,*},
\end{equation}
 where $(N_j^{p,*})$ denotes the basis of $S_{\mathcal F,p}^{(k)}$ that is dual
 to the basis $(N_j^p)$ w.r.t the inner product in $L^2(\mu)$. Moreover, the
 coefficients $d_i$ and the functions $N_i^{p,*}$ are given by the formulas
 \[
d_i = \langle (P_{\mathcal F,\mu} - P_{\mathcal G,\mu}) f, 
	N_i^p \rangle_\mu,  \qquad
	 N_i^{p,*} = \sum_j a_{ij} M_j^p=\sum_j b_{ij} N_j^p,
 \]
 where the matrix $(a_{ij})$, as above, denotes the inverse $G_p^{-1}$ of the
 Gram matrix $G_p =
 (\langle M_i^p, N_j^p\rangle_\mu)$ and the matrix $(b_{ij})$ denotes the
 inverse of the matrix $(\langle N_i^p, N_j^p\rangle_\mu)$. Since $N_i^p = a_i
 M_i^p$ with $a_i:=\mu(\supp_{\mathcal F}  N_i^p)/k$, the coefficients $a_{ij}$ and $b_{ij}$ are related by the equation
 $b_{ij} = a_{ij}/a_j$. Since the matrix $(b_{ij})$ is symmetric, we also have
 $b_{ij}=b_{ji}= a_{ji}/a_i$. The geometric decay of the matrix $(a_{ij})$
 and (3) imply the pointwise estimate
 \[
	 |N_i^{p,*}(t)| \leq \sum_{j} |b_{ij}| |N_j^p(t)| \leq
	 \frac{cC}{a_i} \sum_{j: t\in\supp_{\mathcal F} N_j^p } q^{|i-j|},\qquad \text{for
	 $\mu$-a.e. $t$}.
 \]
 Denoting by $j(t)$ any fixed index with $t\in\supp_{\mathcal F} N_{j(t)}^p$, we obtain  the estimate
 \begin{equation}\label{eq:estN*}
	 |N_i^{p,*}(t)| \leq \frac{c_1}{\mu(\supp_{\mathcal F}  N_i^p)} q^{|i - j(t)|},
\end{equation}
with 
$c_1 := 4kcC^2q^{-C} \sum_{\ell=0}^\infty q^\ell$.
 Next, we estimate the coefficients $d_i$ from equation \eqref{eq:differenceP}.
	 Since $P_{\mathcal F,\mu}$ and $P_{\mathcal G,\mu}$ are both orthogonal
 projections, they have an $L^2(\mu)$-norm of $1$. Thus, we estimate $d_i$ as
 \begin{equation}\label{eq:dj}
	|d_i| \leq 2 \| f \|_{L^2(\mu)} \|N_i^p\|_{L^2(\mu)} \leq
	\frac{2C}{k}\|f\|_{L^\infty(\mu)} \mu(\supp_{\mathcal F} N_i^p)^{1/2},
\end{equation}
 where in the last step we used property \eqref{it:norm}. 

Now, insert estimates \eqref{eq:estN*} and \eqref{eq:dj} into
\eqref{eq:differenceP} and observe that for $i$ such that $\supp_{\mathcal F} N_i^p
\not\subset U$, we have $\mu(\supp_{\mathcal F} N_i^p)\geq \varepsilon/2$ by definition
	of $\mathcal F$. 
Therefore, for $\mu$-almost-every $t\in[0,1]$,
\begin{align*}
	|(P_{\mathcal F,\mu} - P_{\mathcal G,\mu}) f(t)| &= \Big|\sum_{i :
		\supp_{\mathcal F} N_i^p
	\not\subset U} d_i N_i^{p,*}(t)\Big| \\
	& \leq 4c_1 
	\frac{C}{k\varepsilon^{1/2}}\sum_{i : \supp_{\mathcal F} N_i^p
	\not\subset U} \|f\|_{L^\infty(\mu)} q^{|i-j(t)|} \leq c_3 
	\|f\|_{L^\infty(\mu)}
\end{align*}
with $c_3:= 8c_1 C\sum_{j=0}^\infty q^j/\varepsilon^{1/2}$. Taking the
supremum
over $t\in[0,1]$, we obtain that the operator $P_{\mathcal
F,\mu}-P_{\mathcal G,\mu}:L^\infty(\mu)\to L^\infty(\mu)$ is bounded by $c_3$.
Therefore this and the first part of the Theorem \ref{thm:perturbation} imply
\[
	\|P_{\mathcal G,\mu}\|_{L^\infty(\mu)}\leq \|P_{\mathcal G,\mu} -
	P_{\mathcal F,\mu}\|_{L^\infty(\mu)} + \| P_{\mathcal F,\mu}
	\|_{L^\infty(\mu)}  \leq c_3+K_1 
\]
and thus we get the second assertion of the theorem with $K_2 := c_3+ K_1$.
 \end{proof}

\section{Applications}\label{sec:cheb}
In this section we investigate concrete applications of Theorem
\ref{thm:perturbation}. The first example considers projection operators on
classical spline spaces corresponding to different measures and the second
example considers orthoprojectors onto Chebyshevian spline spaces. 

\subsection{Weighted spline spaces}
	We consider the setting of standard B-splines $(M_j)$ on an
	interval $\sigma$-algebra $\mathcal F$, $S_{\mathcal F} = \lin (M_j)$. 
	Moreover, consider the measure $\mu$ with $\dif \mu = w \dif x$,   $w:[0,1]\to
	(0,\infty)$ being a continuous function satisfying the inequalities $M^{-1} \leq w\leq M$ 
	for some constant $M$. Set $M_i^p = M_i/ w(c_i)$
	with the center $c_i$ of $\supp M_i$. 
	We now show properties (1)--(3) on page \pageref{it:perturb} in this
	setting.
	First consider property (1): we see that 
	\begin{align*}
			 |{\mu(\supp M_{j}^p)} & 
			{ \langle M_i^p, M_j^p\rangle_\mu} - {|\supp M_j|} {\langle
				M_i,M_j\rangle}|  \\
				&\leq \int\limits_{\supp M_i\cap\, \supp M_j}
		M_i(x) M_j(x) |\supp M_j|  \Big| \frac{\mu(\supp M_{j})}{|\supp M_j|} 
		\frac{w(x)}{w(c_i)w(c_j)} - 1\Big| \dif x. 
		\end{align*}
	Note that  $ {\mu(\supp M_{j})}/{|\supp M_j|}=w(\xi_j)$ for some
	$\xi_j\in \supp M_j. $ Moreover, for $x\in \supp M_i\cap \supp M_j$, 
	\begin{align*}
		\left|\frac{w(\xi_j)w(x)}{w(c_i)w(c_j)} - 1\right| &\leq
	M^2\big(w(\xi_j)|w(x)-w(c_j)|+w(c_j)|w(\xi_j)-w(c_i)|\big) \\
	&\leq 2M^3 \omega(w, k|\mathcal F|),
\end{align*}
where $\omega(f,\delta):= \sup_{|x-y|<\delta}
	|f(x)-f(y)|$ denotes the modulus of continuity  of $f$. As
	$$\int\limits_{\supp M_i\cap \, \supp M_j}
		M_i(x) M_j(x) |\supp M_j|   \dif x \leq  (\|M_i\|_\infty|\supp M_i|)(\|M_j\|_\infty|\supp M_j|)\leq k^2$$
	and $\omega(w,k|\mathcal F|) \to 0$ as $|\mathcal F|/M\leq |\mathcal
	F|_\mu \to 0$ by the continuity of the
	function $w$, we obtain property (1). 
	 Property (2) follows from the
	corresponding properties of B-splines and  property (3) follows from
	the inequality
	\[
		 \|M_i^p\|_{L^\infty(\mu)}  \cdot \mu(\supp M_{i}^p)
		\leq M^2 \|M_i\|_{L^\infty} |\supp M_{i}|\leq kM^2.
	\]

	Thus, an application of  Theorem \ref{thm:perturbation} yields the
	following
	\begin{cor}
		Suppose that $\dif \mu = w\dif x$ for some 
		continuous function $w$ on $[0,1]$ satisfying $M^{-1}\leq w\leq
		M$ for some constant $M>0$.
		For an interval $\sigma$-algebra $\mathcal F$ and a non-negative
		integer $k$, let 
		$S_{\mathcal F}$ be the corresponding spline space of order $k$
		and $P_{\mathcal F,\mu}$ the orthogonal projection operator onto
		$S_{\mathcal F}$ with respect to the inner product $\langle
		\cdot,\cdot\rangle_\mu$.

		Then, there exists a constant $C$, depending only on $k$, $M$ and the modulus
		of continuity of $w$, so that
		\[
			\sup_{\mathcal F}\| P_{\mathcal F,\mu} : L^\infty(\mu)\to L^\infty(\mu)
			\| \leq C,
		\]
		where $\sup$ is taken over all interval $\sigma$-algebras
	$\mathcal F$.
	\end{cor}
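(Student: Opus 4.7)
The plan is to deduce the corollary by a direct application of the second (compatibility) part of Theorem \ref{thm:perturbation}, taking the ``perturbed'' spaces to be the classical spline spaces themselves, $S_{\mathcal F,p}^{(k)} := S_{\mathcal F}^{(k)}$, equipped with the rescaled B-spline basis $M_i^p := M_i/w(c_i)$ introduced just before the statement of the corollary.

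First I would observe that the three perturbation conditions \eqref{it:perturb}--\eqref{it:norm} have already been verified in the paragraphs preceding the corollary; one may package the estimates there into
\[
\theta(t) := 2 k^2 M^3\, \omega(w, k t),
\]
which is increasing and, by the uniform continuity of $w$ on $[0,1]$, tends to $0$ as $t \to 0$. Note that $\theta$ depends only on $M$ and the modulus of continuity of $w$, while the constant $C$ appearing in \eqref{it:supp}--\eqref{it:norm} depends only on $k$ and $M$.

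Next I would verify the two compatibility conditions. Since the basis $(M_i^p)$ differs from the standard B-spline basis $(M_i)$ only by the nonzero scalar factors $1/w(c_i)$, the perturbed spaces coincide as vector subspaces of $L^2(\mu)$ with the classical spline spaces, so the required nestedness reduces to the well-known nestedness of $(S_{\mathcal F}^{(k)})_{\mathcal F}$. Property \eqref{it:localstructure} is inherited from the analogous local dependence of the classical B-splines (property \eqref{it:local} on page \pageref{it:local}): if $\supp_{\mathcal F} M_i^p \subset I$ and $\mathcal F \cap I = \mathcal G \cap I$, then both $M_i$ and its center $c_i$ depend only on the partition of $I$ common to $\mathcal F$ and $\mathcal G$, and hence $M_i^p = M_i/w(c_i)$ belongs to the basis of $S_{\mathcal G,p}^{(k)}$ as well.

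Having verified the hypotheses, the corollary follows at once from the second assertion of Theorem \ref{thm:perturbation}, with the constant $K_2$ depending only on $k$, $M$, and $\omega(w,\cdot)$, which matches the statement. The only genuinely nontrivial step is the modulus-of-continuity bound yielding \eqref{it:perturb}, and that has already been carried out in the excerpt; I therefore do not expect any real obstacle in writing out the remaining details.
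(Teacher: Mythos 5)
Your proposal is correct and follows essentially the same route as the paper: the paper's proof of this corollary consists precisely of the preceding verification of properties \eqref{it:perturb}--\eqref{it:norm} for $M_i^p = M_i/w(c_i)$ followed by an appeal to Theorem \ref{thm:perturbation}, and you additionally make explicit the compatibility check (needed because the supremum runs over \emph{all} interval $\sigma$-algebras), which the paper leaves implicit. The only nitpick is that, since condition \eqref{it:perturb} is stated in terms of $|\mathcal F|_\mu$ and one only has $|\mathcal F|\leq M\,|\mathcal F|_\mu$, your choice of $\theta$ should read $\theta(t)=2k^2M^3\,\omega(w,kMt)$ rather than $2k^2M^3\,\omega(w,kt)$.
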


\subsection{Chebyshevian spline spaces}\label{subsec:prel}
Here, we only give the necessary definitions and results pertaining to
Chebyshevian spline spaces  used to apply Theorem~\ref{thm:perturbation}.
 As a basic reference and for more information about Chebyshevian splines, we
 refer to the book
	\cite{Schumaker1981}, in particular Chapter~9.
Suppose that for a positive integer $k$, $w=(w_1,\ldots,w_k)$ is a vector consisting of $k$ positive
functions (weights) on $[0,1]$ with $w_i\in C^{k-i+1}[0,1]$
for all
$i\in\{1,\ldots,k\}$. 
Then define  the vector $u = (u_1,\ldots,u_k)$ of functions by 
\begin{align*}
u_1=w_1\textrm{ and }	u_i(x) &= 
	w_1(x)\int_0^x
	w_2(s_2)\cdots \int_0^{s_{i-1}} w_{i}(s_{i}) \dif s_{i} \cdots \dif s_2,
	\qquad i = 2,\ldots,k.
\end{align*}
Let $\mathcal F$ be an interval
$\sigma$-algebra  
and define
the Chebyshevian spline space $S_{\mathcal F,w}$ as 
\begin{align*}
	S_{\mathcal F,w} &= \{ f\in C^{k-2}[0,1]: f \in \lin
	\{u_1,\ldots,u_k\}
	\text{ on each atom of $\mathcal F$}\}.
\end{align*}
If we choose the constant weight functions $w_1 = \cdots  =
w_k=\operatorname{const}$, we get the
classical spline space $S_\mathcal F$ of order $k$.
Given the weights $w=(w_1,\ldots,w_k)$ and the corresponding system $u=(u_1,\ldots,
u_k)$, we define its dual system $u^*(w) =
(u_1^*,\ldots,u_k^*,u_{k+1}^*)$ by $u_1^*(x)=1$ and 
\begin{equation}\label{eq:dualU}
	u_i^*(x) = u_{0,i}^*(x)= \int_0^x w_k(s_k) \int_0^{s_k}
	w_{k-1}(s_{k-1}) \cdots
	\int_0^{s_{k-i+3}} w_{k-i+2}(s_{k-i+2}) \dif s_{k-i+2}\cdots \dif s_k
\end{equation}
for $i=2,\ldots,k+1$.
Moreover, we define for $j=0,\ldots,k$ the functions $u_{j,1}^*(x) = 1$ and
\[
	u_{j,i}^*(x) = \int_0^x w_{k-j}(s_k) \int_0^{s_k}
	w_{k-j-1}(s_{k-1}) \cdots
	\int_0^{s_{k-i+3}} w_{k-j-i+2}(s_{k-i+2}) \dif s_{k-i+2}\cdots \dif s_k
\]
for $i=2,\ldots,k+1-j$.

Next, define $h_1^w(x,y) := w_1(x)$ and 
\begin{equation}\label{def:gh}
	h_j^w(x,y) := w_1(x) \int_y^x w_2(s_2)\int_y^{s_2} \cdots
	\int_y^{s_{j-1}} w_j(s_j)\dif s_j\cdots \dif s_2,\qquad j=2,\ldots,k.
\end{equation}
Then, the functions $g_j^w := \charfun_{x\geq y}(x,y) h_j^w(x,y)$,
$j=1,\ldots,k$, are the analogues of the truncated power functions
$(x-y)_+^{j-1}$ for polynomials, which we get by choosing the weight functions
$w_i=1$.

Let $s_1\leq s_2\leq \cdots \leq s_k$ be an increasing sequence of real numbers.
We set
	$d_i := \max \{ 0\leq j\leq k-1 : s_i = \cdots = s_{i-j} \}$
and define the expression
\begin{equation}\label{eq:defD}
	D\begin{pmatrix}
		s_1,\ldots,s_k \\
		u_1,\ldots,u_k\end{pmatrix} := \det \big( D^{d_i} u_j (t_i)
	\big)_{i,j=1}^k,
\end{equation}
where the letter $D$ on the right hand side denotes the ordinary differential operator.
Now define the sequence $(t_i)_{i=0}^n$ of grid points of the interval
$\sigma$-algebra $\mathcal F$ to be the increasingly ordered sequence of
boundary points of atoms of $\mathcal F$, where the points $0$ and $1$ each
appear $k$ times and every other point appears once in the sequence
$(t_i)_{i=0}^n$.
Then,  the Chebyshevian B-spline function $M_i^w$ for the weights $w=
(w_1,\ldots,w_k)$ and for $x\in [0,1]$ is defined by
	\begin{equation}\label{def:M}
	 M_i^{w} (x) = (-1)^k
	 \frac{	D\begin{pmatrix}
		t_i,\ldots,t_{i+k} \\
		u_1^*,\ldots,u_{k}^*,g_k^w(x,\cdot)\end{pmatrix}}{	D\begin{pmatrix}
		t_i,\ldots,t_{i+k} \\
		u_1^*,\ldots,u_{k+1}^*\end{pmatrix}},\qquad
	i=0,\ldots, n-k.
\end{equation}
The system of functions $(M_i^w)_{i=0}^{n-k}$ forms an
algebraic basis of the Chebyshevian spline space $S_{\mathcal F,w}$ and each
function $M_i^w$ has the properties
\begin{equation}\label{eq: Chebyshev L^1}
	M_i^w>0 \text{ on } (t_i,t_{i+k}),\qquad M_i^w=0 \text{ on }
	(t_i,t_{i+k})^c, \qquad \int_0^1 M_i^w(x)\dif x = 1.
\end{equation}
If we choose the weights $w_1=\cdots=w_k=1$, the above definition yields
the corresponding polynomial B-spline functions $M_i$, normalized in $L^1$ for
which we have the pointwise estimate $M_i(x) \leq k/(t_{i+k} - t_i)$.

In order to show properties (1)--(3) on page \pageref{it:perturb} for the
Chebyshevian spline spaces
$(S_{\mathcal F,w})_{\mathcal F}$ and the corresponding B-spline functions
$(M_i^w)$, we need the following result about the difference between $M_i^w$ and
the classical B-spline function $M_i$:

\begin{prop}\label{prop: Chebyshevian - classical}
 Let $M>0$ be a constant such that $1/M\leq w_i\leq M$ for $i=1,\ldots,k$.

Then there exists a constant $C>0$ depending only on $M$ and $k$ so that   the
following pointwise estimates are true:
\begin{align*}|M_i^w(x)-M_i(x)|  
		&\leq  \frac{ C}{|\supp M_i|} \max\limits_{j=1,\ldots,k}
		\omega(w_j,|\supp M_i|), \\
		|M_i^w(x) |  &\leq  \frac{ C}{|\supp M_i|},
	\end{align*}
	where $\omega(f,\delta)=\sup_{|x-y|<\delta} |f(x)-f(y)|$, as before, denotes the modulus 
	of	continuity of $f$.
\end{prop}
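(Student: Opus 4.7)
The plan is to reduce from general weights to constant weights by exploiting the locality and continuous dependence of $M_i^w$ on $w$. With constant weights, the Chebyshevian B-spline collapses to the classical polynomial B-spline $M_i$, so the error produced by such a replacement yields exactly the desired estimate.

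First I fix a reference point $\xi\in\supp M_i=[t_i,t_{i+k}]$ and set $c_j:=w_j(\xi)$; let $\tilde w:=(c_1,\ldots,c_k)$ be the corresponding constant weight vector (extended globally). Each $u_j^{\tilde w}(x)$ is then a constant multiple of $x^{j-1}/(j-1)!$, so the Chebyshevian spline space $S_{\mathcal F,\tilde w}$ coincides with the classical polynomial spline space $S_{\mathcal F}$, and by uniqueness of the $L^1$-normalized B-spline basis we get $M_i^{\tilde w}=M_i$. By locality of the Chebyshevian B-spline (which can be justified by noting that each $u_j^*$ can be modified by a lower-order term in $\lin\{u_1^*,\ldots,u_{j-1}^*\}$ without changing the determinant \eqref{def:M}, allowing one to re-anchor the iterated integrals at $t_i$), the value of $M_i^w$ depends only on $w|_{\supp M_i}$; likewise for $M_i^{\tilde w}$. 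On $\supp M_i$ we have $\|w_j-\tilde w_j\|_\infty\leq\omega(w_j,|\supp M_i|)$, so it suffices to bound $|M_i^w(x)-M_i^{\tilde w}(x)|$.

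Using the determinant formula \eqref{def:M}, write $M_i^w(x)=(-1)^k N^w(x)/D^w$ and similarly for $\tilde w$, and decompose
\[
M_i^w(x)-M_i^{\tilde w}(x)=(-1)^k\,\frac{(N^w(x)-N^{\tilde w}(x))\,D^{\tilde w}+N^{\tilde w}(x)\,(D^{\tilde w}-D^w)}{D^w\,D^{\tilde w}}.
\]
Each entry of the matrices in $N^w$ and $D^w$ is an iterated integral of $w$ against simple factors, possibly differentiated at a knot. Under the bound $1/M\leq w_j\leq M$ such entries depend Lipschitz-continuously on $w$ in the $L^\infty$-norm, and a multilinear expansion of each determinant yields
\[
|N^w(x)-N^{\tilde w}(x)|+|D^w-D^{\tilde w}|\leq C\,|\supp M_i|^{\beta}\max_{j}\omega(w_j,|\supp M_i|),
\]
for an exponent $\beta$ determined by the knot multiplicity pattern at $t_i,\ldots,t_{i+k}$. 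The Wronskian-type determinant $D^w$ is positive by the theory of Chebyshev systems and, by a scaling argument, is bounded below by a positive constant times $|\supp M_i|^{\beta+1}$. Substituting these estimates yields the first bound, and the second bound $|M_i^w(x)|\leq C/|\supp M_i|$ follows either from the triangle inequality together with the classical estimate $|M_i(x)|\leq k/|\supp M_i|$, or by applying the same scaling analysis directly to $|N^w/D^w|$.

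The delicate point, which I expect to be the main technical obstacle, is the precise tracking of the $|\supp M_i|$-scaling in both numerator and denominator of the determinant ratio, particularly when the knots $t_i,\ldots,t_{i+k}$ are repeated (forcing the derivatives $D^{d_i}$ into the determinant), where the exponent $\beta$ varies with the multiplicity pattern. A clean workaround is the linear change of variable $y=(x-t_i)/|\supp M_i|$, which maps $\supp M_i$ to $[0,1]$ and transforms $M_i^w$ into $|\supp M_i|^{-1}$ times a Chebyshevian B-spline on a fixed interval with rescaled weights that satisfy the same two-sided bounds and whose modulus of continuity is controlled by $\omega(w_j,|\supp M_i|)$; both pointwise estimates then reduce to uniform bounds on $[0,1]$, with the factor $1/|\supp M_i|$ arising from the Jacobian of the transformation.
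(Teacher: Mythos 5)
Your overall strategy --- freeze the weights to constants on $\supp M_i$, identify the constant-weight Chebyshevian B-spline with the classical $M_i$, and control the difference of the two determinant ratios in \eqref{def:M} --- is the same as the paper's (which takes the constants $\overline w_j=\min_{[t_i,t_{i+k}]}w_j$ rather than $w_j(\xi)$, an immaterial difference). The genuine gap is in how you control the determinant perturbations. An \emph{additive} bound $|D^w-D^{\tilde w}|\le C\,|\supp M_i|^{\beta}\max_j\omega(w_j,|\supp M_i|)$ obtained by ``multilinear expansion'' plus Lipschitz dependence of the entries, combined with a lower bound $D^{w}\gtrsim |\supp M_i|^{\beta+1}$, cannot be made uniform over knot configurations: the denominator determinant is comparable to $\prod_{i\le r<s\le i+k}(t_s-t_r)$ (the paper's \eqref{eq:denom Vandermonde type}), which degenerates as interior knots coalesce, whereas an entrywise expansion of the determinant difference does not reproduce the matching small factors $(t_s-t_r)$. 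So the \emph{relative} error $|D^w-D^{\tilde w}|/D^{\tilde w}$ blows up as knots cluster if one only has additive entrywise bounds, and no single exponent $\beta$ ``determined by the multiplicity pattern'' captures near-coalescent (but distinct) knots. What is actually needed --- and what the paper proves by induction on the iterated-integral representation obtained from row reduction, using that the integrands are single-signed --- are the two-sided \emph{ratio} bounds \eqref{eq:ratio D} and \eqref{eq:ratio D g_k}, which give the multiplicative estimates $\varepsilon_k\le C q_k\max_j\omega$ and $|\delta_k|\le C|r_k|\max_j\omega$ uniformly in the knots. Your rescaling $y=(x-t_i)/|\supp M_i|$ only normalizes the total support length to $1$; it does nothing about interior knot clustering, so the ``uniform bounds on $[0,1]$'' you appeal to at the end are precisely the statement that still has to be proved.

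A secondary inconsistency: even taking your intermediate bounds at face value, the exponents do not close. With $|D^w-D^{\tilde w}|\le C|\supp M_i|^{\beta}\omega$ and $D^{w},D^{\tilde w}\ge c|\supp M_i|^{\beta+1}$, the second term of your decomposition is bounded by $|N^{\tilde w}/D^{\tilde w}|\cdot|D^{\tilde w}-D^w|/|D^w|\le (k/|\supp M_i|)\cdot C\omega/|\supp M_i|$, which misses the target by a factor $1/|\supp M_i|$. The numerator and denominator determinants scale differently by one power of $|\supp M_i|$, so assigning the same exponent $\beta$ to both differences is already incompatible with the desired conclusion; the correct statement for the denominator is the relative bound $|D^w-D^{\tilde w}|\le C\,D^{\tilde w}\max_j\omega$, which is exactly what the paper's \eqref{eq: denom epsilon} provides.
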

\begin{proof}
Clearly the second estimate is a consequence of the first estimate and the inequality $M_i(x) \leq k/(t_{i+k} - t_i)$.

	In order to estimate the difference between the Chebyshevian B-spline
	function $M_i^w$ and the classical B-spline function $M_i$, we separately estimate numerator and denominator in \eqref{def:M}.
	First we perform determinant rules to $D\begin{pmatrix}
		t_i,\ldots,t_{i+k} \\
		u_1^*,\ldots,u_{k+1}^*\end{pmatrix}$; as the first column in
	this matrix consists entirely of $1$-entries, we replace, for
	$i=2,\ldots,k+1$, the $i$th row by the difference between the $i$th 	    and the $(i-1)$st row. Recalling the corresponding definitions and factoring 	out $w_k(s_1)\cdots w_k(s_k)$ by multilinearity of the determinant, we obtain
	\[
		D\begin{pmatrix}
		t_i,\ldots,t_{i+k} \\
		u_1^*,\ldots,u_{k+1}^*\end{pmatrix} = \int_{t_i}^{t_{i+1}}\cdots
	\int_{t_{i+k-1}}^{t_{i+k}} w_k(s_1)\cdots w_k(s_k) D\begin{pmatrix}
		s_1,\ldots,s_{k} \\
		u_{1,1}^*,\ldots,u_{1,k}^*\end{pmatrix}\dif s_k\cdots\dif s_1.
	\]
	Denote by $p_i^*(t)=t^{i-1}/(i-1)!$ the function $u_i^*$ corresponding to
	the choice of weight 
	functions $w_1=\cdots=w_k=1.$
	By induction on $k$ we infer the estimate
	\begin{equation}\label{eq:ratio D}
		\prod_{j=1}^k\left(\min_{t\in[t_i,t_{i+k}]}w_j(t)\right)^j 
		\leq \frac{D\begin{pmatrix}
		t_i,\ldots,t_{i+k} \\
		u_1^*,\ldots,u_{k+1}^*\end{pmatrix}}
		{D\begin{pmatrix}
		t_i,\ldots,t_{i+k} \\
		p_1^*,\ldots,p_{k+1}^*\end{pmatrix}}
	    \leq \prod_{j=1}^k\left(\max_{t\in[t_i,t_{i+k}]}w_j(t)\right)^j .
	\end{equation} 
	Note that $D\begin{pmatrix}
		t_i,\ldots,t_{i+k} \\
		p_1^*,\ldots,p_{k+1}^*\end{pmatrix}$ is a constant multiple of the Vandermonde determinant, i.e. $D\begin{pmatrix}
		t_i,\ldots,t_{i+k} \\
		p_1^*,\ldots,p_{k+1}^*\end{pmatrix}=c \prod_{i \leq r < s
	\leq i+k} (t_s- t_r),$ where $c$ depends only on $k$.		
	Since $M^{-1} \leq w_i\leq M$ for $i=1,\ldots,k$, by \eqref{eq:ratio D}
	there exists a constant $C$
	depending only on $M$ and $k$ so that 
	\begin{equation}\label{eq:denom Vandermonde type}	
		C^{-1}\prod_{i \leq r < s
	\leq i+k} (t_s- t_r)
 \leq D\begin{pmatrix}
		t_i,\ldots,t_{i+k} \\
		u_1^*,\ldots,u_{k+1}^*\end{pmatrix} \leq C \prod_{i \leq r < s
	\leq i+k} (t_s- t_r),
	\end{equation}
	Denote 
	$$
	q_k=D\begin{pmatrix}
		t_i,\ldots,t_{i+k} \\
		\overline{u}_1^*,\ldots,\overline{u}_{k+1}^*\end{pmatrix},\qquad
		\varepsilon_k=D\begin{pmatrix}
		t_i,\ldots,t_{i+k} \\
		u_1^*,\ldots,u_{k+1}^*\end{pmatrix}-q_k,
		$$
		 where $\overline{u}_1^*,\ldots,\overline{u}_{k+1}^*$ correspond to the choice of constant weights $\overline{w}_j=\min_{t\in[t_i,t_{i+k}]} w_j(t)$.   
		 Thus, \eqref{eq:ratio D} implies
	\begin{equation}\label{eq: denom epsilon}
			0\leq \varepsilon_k \leq  q_k\cdot\left(\prod_{j=1}^k\left(1+\frac{\omega(w_j,
		t_{i+k}-t_i)}{\min_{t\in[t_i,t_{i+k}]}w_j(t)}\right)^j -1\right)
		\leq Cq_k\max_{j=1,\ldots,k} \omega(w_j, t_{i+k}-t_i), 
	\end{equation}
	for some constant $C$ depending only on $M$ and $k$. 
	
	Similarly, we estimate the numerator $	D\begin{pmatrix}
		t_i,\ldots,t_{i+k} \\
		u_1^*,\ldots,u_{k}^*,g_k^w(x,\cdot)\end{pmatrix}$ in
	\eqref{def:M}. To this end, observe that the definition
	\eqref{def:gh} of $g^w_j$ yields
	\[
		\frac{\partial}{\partial y} g_j^w(x;y) =
		-w_j(y)g_{j-1}^w(x;y),\qquad j\geq 2, x\neq y.
	\]
	Therefore, performing the same determinant rules as above, we write
	\begin{align*}
		D&\begin{pmatrix} 
		t_i,\ldots,t_{i+k} \\
		u_1^*,\ldots,u_{k}^*,g_k^w(x,\cdot)\end{pmatrix}  \\
	&=
	-\int_{t_i}^{t_{i+1}}\cdots \int_{t_{i+k-1}}^{t_{i+k}} w_k(s_1)\cdots
	w_k(s_k)
		D\begin{pmatrix}
		s_1,\ldots,s_k\\
		u_{1,1}^*,\ldots,u_{1,k-1}^*,g_{k-1}^w(x,\cdot)\end{pmatrix}\dif
	s_1\cdots \dif s_k.
	\end{align*}
	Using this formula,  induction on $k$ yields (for $x\in
	(t_i,t_{i+k})$)
	\begin{equation}\label{eq:ratio D g_k}
		\prod_{j=1}^k\left(\min_{t\in[t_i,t_{i+k}]}w_j(t)\right)^j 
		\leq \frac{   D\begin{pmatrix} 
		t_i,\ldots,t_{i+k} \\
		u_1^*,\ldots,u_{k}^*,g_k^w(x,\cdot)\end{pmatrix} }
		{ D\begin{pmatrix}
		t_i,\ldots,t_{i+k} \\
		p_1^*,\ldots,p_{k}^*, g_k(x,\cdot)\end{pmatrix}}
	    \leq \prod_{j=1}^k\left(\max_{t\in[t_i,t_{i+k}]}w_j(t)\right)^j, 
	\end{equation} 
	where $g_k(x,\cdot)$ denotes the function $g_k^w(x,\cdot)$ corresponding to the choice of the weight functions $w_i=1.$	
Define 
	$$
	r_k(x)=D\begin{pmatrix}
		t_i,\ldots,t_{i+k} \\
		\overline{u}_1^*,\ldots,\overline{u}_{k}^*, \overline{g}_k(x,\cdot)\end{pmatrix},\qquad
		\delta_k(x)=D\begin{pmatrix}
		t_i,\ldots,t_{i+k} \\
		u_1^*,\ldots,u_{k}^*, g_k(x,\cdot)\end{pmatrix}-r_k(x),
		$$
		 where 
		  $\overline{g}_k(x,\cdot)$ corresponds to the choice of constant weights $\overline{w}_j=\min_{t\in[t_i,t_{i+k}]} w_j(t)$.  	
		  Thus, using \eqref{eq:ratio D g_k} implies
	  \begin{equation}
		\label{eq:num delta} 
		\begin{aligned}
			 |\delta_k(x)| &
			 \leq  (-1)^k r_k(x)\cdot\left(\prod_{j=1}^k
			 \left(1+\frac{\omega(w_j, t_{i+k}-t_i)}
			 {\min_{t\in[t_i,t_{i+k}]}w_j(t)}\right)^j -1\right)  \\
			&\leq C(-1)^kr_k(x)\max_{j=1,\ldots,k} \omega(w_j, t_{i+k}-t_i),
		\end{aligned}
	\end{equation}
where the constant $C$ depends only on $M$ and $k$. 

Note that the ordinary B-spline $M_i(x)$ satisfies 
	 $$\frac{r_k(x)}{q_k}=
		\frac{D\begin{pmatrix}
		t_i,\ldots,t_{i+k} \\
		\overline{u}_1^*,\ldots,\overline{u}_{k}^*, \overline{g}_k(x,\cdot)\end{pmatrix}}{D\begin{pmatrix}
		t_i,\ldots,t_{i+k} \\
		\overline{u}_1^*,\ldots,\overline{u}_{k+1}^*\end{pmatrix}}
		=  
	 \frac{	D\begin{pmatrix}
		t_i,\ldots,t_{i+k} \\
		1,\ldots,t^{k-1},(x-t)^{k-1}_+\end{pmatrix}}{	D\begin{pmatrix}
		t_i,\ldots,t_{i+k} \\
		1,\ldots,t^{k}\end{pmatrix}}=  (-1)^k M_i(x).$$ 
From the pointwise estimate $M_i(x)\leq k/(t_{i+k} - t_i)$
we therefore get 
\begin{equation}\label{eq: num r_k}
|r_k(x)|=(-1)^kr_k(x)\leq k (t_{i+k} - t_i)^{-1}q_k.
\end{equation} 
Hence, summarizing the above estimates \eqref{eq: denom epsilon}, \eqref{eq:num
delta} and \eqref{eq: num r_k},
 \begin{align*}
		|M_i^w(x)-M_i(x)| &= \Big|\frac{ r_k(x) + \delta_k(x)}{q_k+\varepsilon_k} -
		\frac{r_k(x)}{q_k}\Big| = \Big| \frac{\delta_k(x) q_k -
		\varepsilon_k r_k(x)}{q_k(q_k+\varepsilon_k)} \Big|  \\
		&\leq \frac{|\delta_k(x) q_k| +
		\varepsilon_k |r_k(x)|}{q_k^2} 
		\leq C(t_{i+k}-t_i)^{-1} \max_{j=1,\ldots,k}
		\omega(w_j,t_{i+k}-t_i),
	\end{align*}
	which concludes the proof of Proposition \ref{prop: Chebyshevian - classical}.
\end{proof}

Now we show properties (1)--(3) on page \pageref{it:perturb} with the functions
$M_i^p = M_i^w$ and Lebesgue measure $\mu$. It follows from \eqref{eq: Chebyshev L^1} that $\supp_{\mathcal F} M^p_i=\supp M^p_i$. 
By Proposition~\ref{prop: Chebyshevian - classical} we have the
	following estimate 
	\begin{align*}
		|{\mu(\supp M_{j}^p)} & 
			{ \langle M_i^p, M_j^p\rangle_\mu} - {|\supp M_j|} {\langle
				M_i,M_j\rangle}| \\
				&=|\supp M_{j}|\cdot   |\langle
		M_i^p - M_i, M_j^p\rangle + \langle M_i, M_j^p - M_j\rangle|\\
		& \leq |\supp M_{i}| |\supp M_{j}|(\|M_i^p - M_i\|_\infty\| M_j^p\|_\infty+\|M_i\|_\infty\| M_j^p - M_j\|_\infty)\\
		&\leq C \max_{i=1,\ldots,k} \omega(w_i,k |\mathcal F|),
	\end{align*}
	where the constant $C$ depends only on $M$ and $k$. This confirms property
	(1). Moreover,  property (2)  
	 is satisfied by definition and (3) is a consequence of
	 Proposition~\ref{prop: Chebyshevian - classical}.
	 Moreover the
	spaces $(S_{\mathcal F,w})_{\mathcal F}$ are compatible by the
	definition of the Chebyshevian B-spline functions $M_i^w$.
	Therefore, an application of Theorem \ref{thm:perturbation} yields 
	\begin{cor}
			Let $k$ be a positive integer and
		suppose that $w=(w_1,\ldots w_k)$ is a vector of  functions on
		$[0,1]$ with
		$w_i\in C^{k-i+1}[0,1]$ 
		 satisfying the inequalities $M^{-1} \leq w_i\leq M$ for
		any $i=1,\ldots,k$.
		For an interval $\sigma$-algebra $\mathcal F$, let 
		$S_{\mathcal F,w}$ be the corresponding space of Chebyshevian
		splines and 
		$P_{\mathcal F,w}$ the orthogonal projection operator onto $S_{\mathcal F,w}$ with respect to the
	inner product $\langle \cdot,\cdot\rangle$.

		 Then, there exists a constant $C$, depending only on $k$, $M$ and the moduli
		of continuity of the weight functions $w_1,\ldots,w_k$, so that
		\[
			\sup_{\mathcal F}\| P_{\mathcal F,w} : L^\infty\to L^\infty
			\| \leq C,
		\]
		where $\sup$ is taken over all interval $\sigma$-algebras
	$\mathcal F$.
	\end{cor}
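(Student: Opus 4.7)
The plan is to derive this corollary as a direct application of Theorem \ref{thm:perturbation}. Specifically, I would take $\mu$ to be Lebesgue measure on $[0,1]$ (which is a non-atomic probability measure), let the unperturbed spaces be the classical polynomial spline spaces $(S_{\mathcal F}^{(k)})_{\mathcal F}$ with their $L^1$-normalized B-spline basis $(M_i)$, and let the perturbed spaces be the Chebyshevian spline spaces $(S_{\mathcal F,w})_{\mathcal F}$ with their Chebyshevian B-spline basis $(M_i^w)$, i.e. $M_i^p := M_i^w$.

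My first step is to verify that $(S_{\mathcal F,w})_{\mathcal F}$ is a $(\mu,\theta)$-perturbation of $(S_{\mathcal F}^{(k)})_{\mathcal F}$ in the sense of Section \ref{sec:perturb}. Property \eqref{it:supp} is immediate from \eqref{eq: Chebyshev L^1}, since $M_i^w$ is supported in $(t_i,t_{i+k})$, matching the support structure of the classical B-spline, so the constant can be chosen depending only on $k$. Property \eqref{it:norm} follows from the second inequality of Proposition \ref{prop: Chebyshevian - classical}. For property \eqref{it:perturb}, I would use the identity $|\supp M_j^w| = |\supp M_j|$ together with $\int M_i^w = \int M_i = 1$ to rewrite the difference $\mu(\supp M_j^p)\langle M_i^p,M_j^p\rangle_\mu - |\supp M_j|\langle M_i,M_j\rangle$ as $|\supp M_j|\bigl(\langle M_i^w-M_i,M_j^w\rangle + \langle M_i, M_j^w-M_j\rangle\bigr)$, and then bound each inner product via Proposition \ref{prop: Chebyshevian - classical} combined with the trivial sup-norm bound $\|M_j^w\|_\infty, \|M_j\|_\infty \leq C/|\supp M_j|$. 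This yields \eqref{it:perturb} with $\theta(t) = C\max_{j}\omega(w_j, kt)$, and $\theta(t)\to 0$ as $t\to 0$ follows from uniform continuity of each $w_j$ on $[0,1]$.

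Next, I need compatibility. Nestedness $S_{\mathcal G,w}\subset S_{\mathcal F,w}$ for $\mathcal G\subset \mathcal F$ is immediate from the definition of Chebyshevian spline spaces, since any piecewise-$u$ function for the coarser partition is also a piecewise-$u$ function for the finer one. The local structure property \eqref{it:localstructure} is exactly the Chebyshevian analogue of property \eqref{it:local} for classical B-splines: by the determinantal formula \eqref{def:M}, $M_i^w$ depends only on the knots $t_i,\ldots,t_{i+k}$, so if the trace $\sigma$-algebras coincide on a set $I$ containing $\supp_{\mathcal F} M_i^w$, then $M_i^w$ is unchanged when formed with respect to $\mathcal G$, and in particular lies in $S_{\mathcal G,w}$.

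Having verified all the hypotheses, the second (compatible) assertion of Theorem \ref{thm:perturbation} gives the desired $L^\infty$-bound with a constant $K_2$ depending only on $C$, $\theta$ and $k$, which translates back to dependence on $k$, $M$ and the moduli of continuity $\omega(w_j,\cdot)$. The only modestly delicate step I expect is verifying \eqref{it:perturb} cleanly—tracking the factors of $|\supp M_j|$ against the $1/|\supp M_i|$-type blow-up in Proposition \ref{prop: Chebyshevian - classical}—but after the cancellation this reduces to the modulus of continuity bound, so no real obstruction arises.
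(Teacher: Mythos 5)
Your proposal is correct and follows essentially the same route as the paper: verify properties (1)--(3) for $M_i^p=M_i^w$ with Lebesgue measure via Proposition \ref{prop: Chebyshevian - classical} (using the same decomposition $|\supp M_j|\bigl(\langle M_i^w-M_i,M_j^w\rangle+\langle M_i,M_j^w-M_j\rangle\bigr)$ and the cancellation of $|\supp M_i|\,|\supp M_j|$ against the $1/|\supp M_i|$-type bounds), note compatibility from the determinantal definition of $M_i^w$, and invoke the second assertion of Theorem \ref{thm:perturbation}. No gaps; your treatment of compatibility is in fact slightly more explicit than the paper's.
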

	
\subsection*{Acknowledgements } K.~K. is supported by SCS RA grant 18T-1A074 and
M.~P. is  supported by the Austrian Science Fund FWF, project P32342.

 \bibliographystyle{plain}
\bibliography{projection}

\end{document}